\newcommand{\be}{\begin{equation}}
\newcommand{\ee}{\end{equation}}
\newcommand{\mS}{\mathcal{S}}
\newcommand{\mN}{\mathcal{N}}
\newcommand{\id}{e}
\newcommand{\hA}{\widehat{A}}
\newcommand{\hb}{\widehat{b}}
\newcommand{\hx}{\widehat{x}}
\newcommand{\argminx}{\underset{x}{\operatorname{argmin\ }}}
\newcommand{\argminy}{\underset{y}{\operatorname{argmin\ }}}
\newcommand{\argminz}{\underset{z}{\operatorname{argmin\ }}}
\title{``Plug-and-Play'' Edge-Preserving Regularization\thanks{Received xxx}}
\author{Donghui Chen\thanks{School of Securities and Futures, Southwestern
        University of Finance and Economics ({\tt chendonghui@swufe.edu.cn}).}
        \and Misha E. Kilmer\thanks{Department of Mathematics, Tufts University
        {} ({\tt misha.kilmer@tufts.edu}).}
        \and Per Christian Hansen\thanks{Department of Applied Mathematics and
        Computer Science, Technical University of Denmark ({\tt pcha@ dtu.dk}).
        The author is supported by grant 274-07-0065 from the
        Danish Research Council for Technology and Production Sciences.} }
\begin{document}

\maketitle

\begin{abstract}
In many inverse problems it is essential to use regularization methods that
preserve edges in the reconstructions, and many reconstruction models have been
developed for this task, such as the Total Variation (TV) approach.
The associated algorithms are complex and require a good knowledge of large-scale
optimization algorithms, and they involve certain tolerances that the user must choose.
We present a simpler approach that relies only on standard computational building
blocks in matrix computations, such as orthogonal transformations, preconditioned
iterative solvers,
Kronecker products, and the discrete cosine transform\,---\,hence the term
``plug-and-play.''
We do not attempt to improve on TV reconstructions, but rather provide an easy-to-use
approach to computing reconstructions with similar properties.
\end{abstract}

\begin{keywords}
Image deblurring, inverse problems, $p$-norm regularization, projection algorithm
\end{keywords}

\begin{AMS}
65F22, 65F30
\end{AMS}

\pagestyle{myheadings}
\thispagestyle{plain}
\markboth{D. CHEN, M. E. KILMER AND P. C. HANSEN}{``PLUG-AND-PLAY'' EDGE-PRESERVING REGULARIZATION}

\section{Introduction}\label{eppsec:intr}

This paper is concerned with  discretizations of linear ill-posed problems,
which arise in many technical and scientific applications such as astronomical
and medical imaging, geoscience, and non-destructive testing
\cite{hansenbook,MuellerSiltanen}.
The underlying model is $b = A\, \bar{x} + \eta$,
where $b$ is the noisy data, the matrix $A$ (which is often structured or sparse)
represents the forward operator, $\bar{x}$ is the exact solution, and $\eta$
denotes the unknown noise.
We present a new large-scale regularization algorithm which is able
to reproduce sharp gradients and edges in the solution.
Our algorithm uses only standard linear-algebra building blocks and is
therefore easy to implement and to tune to specific applications.

For ease of exposition, we focus on image deblurring problems involving
$m\times m$ images $B$ (the blurred and noisy image) and $X$ (the reconstruction).
With $b = \text{vec}(B)$ and $x = \text{vec}(X)$, both of length $n=m^2$,
the $n\times n$ matrix $A$ is determined by the point-spread function (PSF)
and corresponding boundary conditions~\cite{hanaol06}.
This matrix is very ill-conditioned (or rank deficient), and computing the ``naive solution''
$A^{-1}b = \bar{x}+A^{-1}\eta$ (or, in the rank-deficient case, the minimum norm solution)
results in a reconstruction that is completely dominated by the inverted noise~$A^{-1}\eta$.

Classical regularization methods, such as Tikhonov regularization or truncated SVD,
damp the noise component in the solution by suppressing high-frequency components
at the expense of smoothing the edges in the reconstruction.
The same is true for regularizing iterations (such as CGLS or GMRES) based on computing
solutions in a low-dimensional Krylov subspace.
The underlying characteristic in these methods is that regularization is achieved
by projecting the solution onto a low-dimensional \textit{signal subspace} $\mathcal{S}_k$
spanned by $k$, low-frequency basis vectors, with the result that the high-frequency components
are missing, hindering the reconstruction of sharp edges.

The projection approach is a powerful paradigm that can often be tailored to
the particular problem.
While these projected solutions may not always have satisfactory accuracy or details,
they still contain a large component of the desired solution, namely, the
low-frequency component which can be reliably determined from the noisy data.
What is missing is the high-frequency component, spanned by high-frequency basis vectors,
and this component must be determined via our prior information about the desired solution.

This work describes an easy-to-use large-scale method for computing the needed
high-frequency component,
related to the prior information that image must have smooth regions while the gradient
of the reconstructed image is allowed to have some (but not too many) large values.
This idea is similar in spirit to Total Variation regularization, where the gradient
is required to be sparse;
but by relaxing this constraint we arrive at problems that are simpler to solve.
The work can be considered as a continuation of earlier work \cite{PPTSVD2,pptsvd96,mtsvd92}
by one of us; it is also related to the decomposition approach in~\cite{BaglamaReichel}.

The remainder of this paper is organized as follows. In Section~\ref{eppsec:epp} we present
the new edge-preserving algorithm and the convergence analysis. Section~\ref{eppsec:alg} discusses the efficient numerical implementation issues. Section~\ref{eppsec:exp} presents numerical experiments of the new deblurring algorithm and comparisons with other state-of-art deblurring algorithms. The conclusions are presented in Section~\ref{eppsec:con}.

\section{The Projection-Based Edge-Preserving Algorithm}
\label{eppsec:epp}

This section pre\-sents the main ideas of the algorithm, while the implementation details
for large-scale problems are discussed in the next section.

\subsection{Mathematical Model}

Throughout, the matrix $L$ defines a discrete derivative or gradient of the solution
(to be precisely defined later), and $\| \cdot \|_p$ denotes the vector $p$-norm.
The underlying prior information is then that the solution's seminorm $\| L\, x \|_p$,
with $1< p<2$,
is not large (which allows some amount of large gradients or edges in the reconstruction).
The choice of the combination of $L$ and $p$ is important and, of course, somewhat problem dependent;
the matrix $L$ used here is the $2m(m-1)\times n$ matrix given by
  \[
    L = \begin{pmatrix} \!\!\!\! L_1 \otimes I \\ \ \ I \otimes L_1 \end{pmatrix}, \quad \mbox{where} \quad
    L_1 = \begin{pmatrix} -1 & 1 & 0& \cdots & 0 \\ 0 & -1 & 1& \cdots & 0 \\
          \vdots  & \vdots  & \ddots  & \ddots & \vdots \\ 0 & 0 & \cdots & -1& 1 \end{pmatrix}
    \in \mathbb{R}^{m-1\times m},
  \]
where $\otimes$ is the Kronecker product \cite{PPTSVD2}.
The one-dimensional null space $\mathcal{N}(L)$ of this matrix is spanned by
the $n$-vector $e$ of all ones.
%If $p=1$ then $\| L\, x \|_1$ is referred to as the anisotropic TV of the image.

Assume $W_k\in \mathbb{R}^{n\times k}$ is a matrix with orthonormal columns that span the
signal subspace $\mathcal{S}_k$, and let $W_0$ be the matrix containing the orthonormal
basis vectors for the complementary space $\mathcal{S}_k^\perp$.
The fundamental assumption is that the columns of $W_k$ represent ``smooth" modes
in which it is possible to distinguish a substantial component of the signal from the noise.
In other words, with the model from Section~1, we assume that
  \begin{equation}
  \label{eq:separate}
    \| W_k^T \bar{x} \|_2 \gg \| W_k^T (A^{-1} \eta) \|_2 \ .
  \end{equation}
This ensures that we can compute a good, but smooth, approximation to $\bar{x}$
as
  \[
    x_k = W_k\, y_k \ , \qquad y_k = \mathrm{argmin}_y \| (A\, W_k)\, y - b \|_2 \ ,
  \]
and we refer to the minimization problem for $y_k$ as the \textit{projected problem},
which we assume is easy to solve.
To obtain a reconstruction with the desired features,
our strategy is then to compute the solution of the following modified projection problem
  \be
  \label{eppeq:epp}
    \min_{x\in \mathcal{B}} \| L \, x \|_p \quad \text{with} \quad
    \mathcal{B} = \{ x: x = \argminz \| (A\,W_k W_k^T)z - b \|_2 \} \ ,
  \ee
with  $L$ defined above.
As we shall see, we can express the solution to (\ref{eppeq:epp}) as the low-frequency
solution $x_k$ plus a high-frequency correction.

\subsection{Uniqueness Analysis}

Eld\'en \cite{elden82} provides an explicit solution of~\eqref{eppeq:epp} for the case
$p=2$ and proves the uniqueness condition for the minimizer.
The MTSVD algorithm \cite{mtsvd92} corresponds the case where $p=2$ and $W_k$ consists of
the first $k$ right singular vectors, while the PP-TSVD algorithm \cite{pptsvd96}
and its 2D extension \cite{PPTSVD2} correspond to the same $W_k$ and $p=1$.
In this work, we extend these results by solving~\eqref{eppeq:epp} for $1<p<2$ and for
different choices of $W_k$.
Below we present results that give conditions for the existence and uniqueness of the
solution to~\eqref{eppeq:epp}.

\begin{lemma} \label{lem:epp}
The linear $p$-norm problem
  \be
      \argminx \| A\, x-b \|_p^p, \qquad p>1, \label{eppeq:pmin}
  \ee
has a unique minimizer if and only if $A$ has full column rank.
\end{lemma}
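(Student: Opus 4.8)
The plan is to exploit strict convexity of the function $f(x) = \|Ax - b\|_p^p$ for $p > 1$. First I would recall that the scalar map $t \mapsto |t|^p$ is strictly convex on $\mathbb{R}$ when $p > 1$ (its second derivative $p(p-1)|t|^{p-2}$ is positive away from the origin, and a direct check handles the origin), hence each term $|a_i^T x - b_i|^p$, where $a_i^T$ denotes the $i$th row of $A$, is a convex function of $x$ that is strictly convex along any direction not annihilated by $a_i^T$. Summing over $i$, $f$ is convex, and it is \emph{strictly} convex precisely along directions $v$ with $Av \neq 0$; if $A$ has full column rank there is no nonzero $v$ with $Av = 0$, so $f$ is strictly convex on all of $\mathbb{R}^n$. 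Combined with coercivity of $f$ (again a consequence of full column rank, since then $\|Ax\|_p \to \infty$ as $\|x\| \to \infty$), strict convexity gives existence and uniqueness of the minimizer.

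For the converse, suppose $A$ does not have full column rank, so there exists $v \neq 0$ with $Av = 0$. Then for any minimizer $\hx$ we have $A(\hx + t v) = A\hx$ for all $t \in \mathbb{R}$, so $f(\hx + tv) = f(\hx)$, and the whole line $\hx + tv$ consists of minimizers; hence the minimizer is not unique. This direction is essentially immediate once existence of at least one minimizer is granted, which follows from convexity and continuity of $f$ together with the observation that $f$ is constant along the null space of $A$ and coercive on a complement of that null space (so a minimizer always exists, whether or not $A$ has full rank).

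The only genuinely delicate point is the strict convexity argument at points where some rows $a_i^T$ are orthogonal to the chosen direction $v$: one must argue that if $Av \neq 0$ then at least one term $|a_i^T x - b_i|^p$ is strictly convex in the direction $v$, and that strict convexity of a single summand (the others being merely convex) suffices to make the sum strictly convex along $v$. I would handle this by the standard midpoint characterization: for $x \neq y$ write $x - y = v \neq 0$; since $Av \neq 0$ pick $i$ with $a_i^T v \neq 0$, so $s \mapsto |a_i^T(y + s v) - b_i|^p$ is strictly convex in $s$, giving strict inequality for that term at $s = 1/2$, while every other term gives a (non-strict) convex inequality; adding yields $f\!\left(\tfrac{x+y}{2}\right) < \tfrac12 f(x) + \tfrac12 f(y)$. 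This establishes that $f$ has at most one minimizer on any affine set on which $x \mapsto Ax$ is injective, which is exactly what the full-column-rank hypothesis provides globally.
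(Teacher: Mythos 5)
Your proof is correct, and it rests on the same central idea as the paper's\,---\,strict convexity of $x \mapsto \|Ax-b\|_p^p$ exactly when $A$ has full column rank\,---\,but the route you take to strict convexity is genuinely different and, as it happens, more watertight. The paper argues via second derivatives: it asserts that the Hessian $H(x)$ of $\|x\|_p^p$ is positive definite for all $x$, so that the Hessian $A^*H(x)A$ of the objective is positive definite iff $A$ has full column rank. That Hessian claim is delicate: $H(x) = p(p-1)\,\mathrm{diag}\bigl(|x_i|^{p-2}\bigr)$, which for $1<p<2$ fails to exist at points with a zero component and for $p>2$ is only positive semidefinite there, so the asserted equivalence ``strictly convex $\Leftrightarrow$ Hessian positive definite everywhere'' does not literally apply on all of $\mathbb{R}^n$. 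Your midpoint argument\,---\,one summand strictly convex in the direction $x-y$ (chosen via a row $a_i^T$ with $a_i^T(x-y)\neq 0$) plus ordinary convexity of the remaining summands\,---\,avoids differentiability altogether and is valid for every $p>1$. You also supply two points the paper's proof leaves implicit: existence of a minimizer (coercivity on a complement of $\mathcal{N}(A)$), and the converse direction (a nonzero null vector of $A$ yields a whole line of minimizers). What your version buys is rigor at the nonsmooth points and a complete ``only if'' half; what the paper's version buys is brevity.
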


\begin{proof}
%Strict convexity of the $p$-norm follows from Minkowski's inequality \cite{hojo90}, since
%$\| x+y \|_p \leq \| x \|_p + \| y \|_p$ for any $p>1$ (with equality if and only if $x$ and
%$y$ are linearly dependent.
%Its Hessian $H(x)$ is positive definite.
%The Hessian of $\| A\, x-b \|_p^p$ is $A^* H(x)\, A$, and it is positive definite if and
%only if $A$ has full column rank.
%It follows that $\| A\, x-b \|_p^p$ is strictly convex if and only if $A$ has full column
%rank.
The function $\| x \|_p^p$ is strictly convex for $1 < p$, or equivalently,
the Hessian $H(x)$ of $\|x\|_p^p$ is positive definite for all $x$.
This implies that $\|Ax-b\|_p^p$ is strictly convex (or equivalently, its Hessian $A^* H(x)\, A$
is positive definite for all $x$) if and only if $A$ has full column rank.
It follows from strict convexity that the minimizer is unique.\footnote{We thank Martin S.
Andersen for help with this proof.}
$\qquad$
\end{proof}

\begin{theorem} \label{thm:epp}
The modified projection problem \eqref{eppeq:epp} has a unique minimizer if and only if
$\mN(AW_kW_k^T) \cap \mN(L) = \{ 0 \}$.
\end{theorem}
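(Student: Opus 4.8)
The plan is to reduce \eqref{eppeq:epp} to an unconstrained linear $p$-norm problem of the form treated in Lemma~\ref{lem:epp}, and then to translate that lemma's full-column-rank condition into the stated intersection condition. First I would identify the feasible set: writing $M = A W_k W_k^T$, the set $\mathcal{B}$ is exactly the set of least-squares solutions of $M z = b$, which is the nonempty affine subspace $\mathcal{B} = x_0 + \mN(M)$ for any particular least-squares solution $x_0$ (say, the minimum-norm one); note that $\mathcal{B}$ does not depend on the choice of $x_0$. I would then pick a matrix $N$ whose columns form a \emph{basis} for $\mN(M)$, so that $N$ has full column rank and every feasible $x$ can be written uniquely as $x = x_0 + N w$.

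Since $t\mapsto t^p$ is strictly increasing on $[0,\infty)$, minimizing $\|L x\|_p$ over $\mathcal{B}$ is equivalent to minimizing $\| L(x_0 + N w) \|_p^p = \| (L N)\, w - (-L x_0) \|_p^p$ over all $w$, and $w\mapsto x_0 + N w$ is a bijection between the two solution sets (a minimizer exists because, viewed as a function of the part of $w$ orthogonal to $\mN(LN)$, the objective is coercive). Hence \eqref{eppeq:epp} has a unique minimizer if and only if this reduced problem does; and by Lemma~\ref{lem:epp}, applied to the matrix $L N$ with right-hand side $-L x_0$ and the given exponent $p>1$, the reduced problem has a unique minimizer if and only if $L N$ has full column rank.

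The last step is to show $L N$ has full column rank if and only if $\mN(M)\cap\mN(L)=\{0\}$. If $L N w = 0$ then $N w\in\mN(L)$, while $N w\in\mN(M)$ by construction, so $N w\in\mN(M)\cap\mN(L)$; if that intersection is trivial then $N w = 0$, hence $w=0$ since $N$ has full column rank, so $L N$ has full column rank. Conversely, any nonzero $v\in\mN(M)\cap\mN(L)$ equals $N w$ for some $w\neq 0$, and then $L N w = L v = 0$, so $L N$ is rank deficient. Chaining these equivalences proves the theorem. There is no deep obstacle here; the one point to watch is to use a genuine basis for $\mN(M)$ (not just a spanning set) when forming $N$, so that $N w = 0 \Rightarrow w = 0$ is available in both directions of this last equivalence — the rest is bookkeeping on top of Lemma~\ref{lem:epp}.
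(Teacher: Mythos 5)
Your proof is correct and follows essentially the same route as the paper's: parametrize the affine feasible set $\mathcal{B}$, reduce \eqref{eppeq:epp} to an unconstrained linear $p$-norm problem, invoke Lemma~\ref{lem:epp}, and translate the full-column-rank condition into the null-space intersection condition. The one substantive difference is that you parametrize $\mathcal{B}$ by $x = x_0 + Nw$ with the columns of $N$ a \emph{basis} of $\mN(AW_kW_k^T)$, whereas the paper writes $\mathcal{B} = \{\tilde{b} + Px'\}$ with $P$ the orthogonal projector onto that null space and then asserts uniqueness if and only if $\mN(LP) = \{0\}$. Taken literally, the paper's condition can never hold when $P \neq I$, since $\mN(P) = \mN(AW_kW_k^T)^\perp \subseteq \mN(LP)$; what is actually needed is that $L$ be injective on $\mathrm{range}(P) = \mN(AW_kW_k^T)$, i.e.\ $\mN(LP)\cap\mathrm{range}(P)=\{0\}$, which is precisely the stated intersection condition. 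Your bijective parametrization sidesteps this imprecision automatically, because uniqueness in $w$ is then genuinely equivalent to uniqueness in $x$, and you also supply the existence (coercivity) argument that Lemma~\ref{lem:epp} leaves implicit. In short, same approach, but your write-up is slightly more careful at the one point where care is needed.
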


\begin{proof}
From \cite{elden82}, the constraint set $\mathcal{B}$ in~\eqref{eppeq:epp} can be written as
  \[
    \mathcal{B} = \{ x: x = (AW_kW_k^T)^\dagger b + Px', \ x' \mbox{ arbitrary} \},
  \]
where $\dagger$ denotes the Moore-Penrose pseudoinverse~\cite{bjorck96} and
  \[
    P = I-(AW_kW_k^T)^\dagger (AW_kW_k^T)
  \]
is the orthogonal projector onto $\mN(AW_kW_k^T)$.
Let $\tilde{b} = (AW_kW_k^T)^\dagger b$. Solving the constrained minimization~\eqref{eppeq:epp}
is equivalent to solving the following unconstrained problem
  \[
    \min_{\tilde{x}} \| LP\tilde{x} - (-L\tilde{b}) \|_p.
  \]
By Lemma~\ref{lem:epp}, the above minimization problem has a unique solution if and only if
$\mN(LP) = \{ 0 \}$.
This is true for $P = I-(AW_kW_k^T)^\dagger (AW_kW_k^T)$, the projection onto
$\mN(AW_kW_k^T)$, if and only if $\mN(AW_kW_k^T) \cap \mN(L) = \{ 0 \}$. $\qquad$
\end{proof}

\subsection{Algorithm}

It follows from the proof of Theorem~\ref{thm:epp} that we can solve the modified projection
problem \eqref{eppeq:epp} in two steps.
We first compute an approximate solution $x_k \in \mathcal{S}_k$ that contains the smooth components,
and then we compute the edge-correction component $x_0$ in the orthogonal complement $\mS_k^\perp$.
As a result,
  \[
    x = x_k + x_0 = W_k y_k + W_0 y_0 \ ,
  \]
where $y_k$ is the solution to the projected problem, and $y_0$ is the solution to an
associated $p$-norm problem.
These two solutions are computed sequentially, as shown in the EPP Algorithm~\ref{eppalg:eppalg}.

\begin{algorithm}[h]
\caption{Edge-Preserving Projection (EPP) Algorithm} %\label{alg:epp}
\begin{algorithmic}[1]
\State Compute the smooth component $x_k = W_k y_k$ using the projected problem
  \be
  \label{eppeq:eppy}
    y_k = \argminy \| (AW_k)y-b \|_2.
  \ee
\vspace*{-4mm}
\State Compute the correction component $x_0 = W_0 y_0$ using the $p$-norm problem
  \be
    y_0 = \argminy \| (LW_0)y - (-LW_k y_k)  \|_p. \label{eppeq:eppz}
  \ee
\vspace*{-4mm}
\State The regularized solution is then
\vspace*{-2mm}
  \be
    x = W_k y_k + W_0 y_0 .
  \ee
\vspace*{-6mm}
\end{algorithmic}\label{eppalg:eppalg}
\end{algorithm}

\subsection{Choosing Projection Spaces}

From Lemma~\ref{lem:epp}, a sufficient condition for the uniqueness of $x$ is that
both $AW_k$ and $LW_0$ have full column rank, for then \eqref{eppeq:eppy} and \eqref{eppeq:eppz}
in the EPP Algorithm have unique solutions $y_k$ and $y_0$, correspondingly.
In principle, we can choose any subspace $\mS_k$ and its orthogonal complement $\mS_k^\perp$
with corresponding $W_k$ and $W_0$.
But in practice, however, in order to have a useful and efficient numerical implementation,
we must choose suitable basis vectors for $\mS_k$ with the following requirements:
\begin{itemize}
\item The matrix $W_k$ must separate signal from noise according to \eqref{eq:separate}.
\item The matrices $AW_k$ and $LW_0$ must have full column rank.
\item There are efficient algorithms to compute multiplications with $W_k$ and $W_0$
      and their transpose.
\end{itemize}

\subsubsection{Singular Vectors}

The MTSVD and PP-TSVD algorithms proposed in \cite{PPTSVD2,pptsvd96,mtsvd92} use the first
$k$ singular vectors as the basis vectors for $\mathcal{S}_k$.
In this case we have the following result.

\begin{theorem}\label{thm:svd}
Assume that $W_k = [v_1, v_2, \cdots,  v_k]$, where $ v_i$ are right singular vectors of $A$
corresponding to nonzero singular values.
Then the modified projection problem \eqref{eppeq:epp} has a unique solution if and only if
$\id\notin \mathcal{N}(A) = \mathrm{range}(W_0)$.
\end{theorem}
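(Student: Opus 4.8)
The plan is to specialize the general uniqueness criterion of Theorem~\ref{thm:epp} by computing the relevant null space from the SVD of $A$. By Theorem~\ref{thm:epp}, problem \eqref{eppeq:epp} has a unique minimizer if and only if $\mN(AW_kW_k^T) \cap \mN(L) = \{0\}$, and from the discussion following the definition of $L$ we already know that $\mN(L) = \mathrm{span}\{e\}$. Since $e \ne 0$, this intersection is trivial exactly when $e \notin \mN(AW_kW_k^T)$. So it suffices to show that $\mN(AW_kW_k^T) = \mathrm{range}(W_0)$; combined with the identification $\mathrm{range}(W_0) = \mN(A)$ used in the statement (which presumes $A$ has rank $k$, so that the $n-k$ columns of $W_0$, namely $v_{k+1},\dots,v_n$, span $\mN(A)$), this yields the claimed condition.

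To identify $\mN(AW_kW_k^T)$, first I would write $A = U\Sigma V^T$ and recall that $W_kW_k^T$ is the orthogonal projector onto $\mathrm{span}\{v_1,\dots,v_k\}$. For an arbitrary $z$, expanding $W_kW_k^Tz = \sum_{i=1}^k (v_i^Tz)\,v_i$ and applying $A$ with $Av_i = \sigma_i u_i$ gives $AW_kW_k^Tz = \sum_{i=1}^k \sigma_i (v_i^Tz)\,u_i$. Because the left singular vectors $u_i$ are orthonormal and the singular values $\sigma_1,\dots,\sigma_k$ attached to the columns of $W_k$ are all nonzero, this sum vanishes if and only if $v_i^Tz = 0$ for every $i = 1,\dots,k$, that is, if and only if $z \perp \mathrm{span}\{v_1,\dots,v_k\}$. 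Hence $\mN(AW_kW_k^T) = \mathrm{span}\{v_1,\dots,v_k\}^\perp = \mathrm{range}(W_0)$.

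Putting the two steps together, \eqref{eppeq:epp} has a unique minimizer if and only if $\mathrm{range}(W_0) \cap \mathrm{span}\{e\} = \{0\}$, i.e.\ if and only if $e \notin \mathrm{range}(W_0) = \mN(A)$, which is exactly the stated condition. I do not expect a genuine obstacle here: the argument is essentially a one-line application of Theorem~\ref{thm:epp} once the null space is known, and the only point requiring mild care is the SVD bookkeeping\,---\,specifically, using that each $\sigma_i$ multiplying a column of $W_k$ is nonzero, so that $AW_kW_k^Tz$ cannot vanish without $W_k^Tz$ itself vanishing, and that $W_0$ is precisely an orthonormal basis for the orthogonal complement of $\mathrm{span}\{v_1,\dots,v_k\}$.
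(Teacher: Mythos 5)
Your proposal is correct and follows essentially the same route as the paper: both apply Theorem~\ref{thm:epp} after identifying $\mN(AW_kW_k^T)=\mathrm{range}(W_0)$, with your version merely spelling out the SVD computation that the paper asserts from the projector property. Your parenthetical remark that the identification $\mathrm{range}(W_0)=\mN(A)$ presumes $\mathrm{rank}(A)=k$ is a fair observation about the theorem's statement, but it does not change the argument.
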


\begin{proof}
Since $W_k W_k^T$ is the orthogonal projector onto $\mathrm{range}(W_k)$ it
follows that
 \[
   \null( A W_k W_k^T) = \mathrm{range}(W_0) = \mathrm{span}\{ v_{k+1},\ldots,v_n \},
 \]
and the requirement from Theorem \ref{thm:epp} becomes
$\mathrm{range}(W_0) \cap \{ e \} \notin \{ 0 \}$, which is clearly satisfied
if $\id\notin \mathrm{range}(W_0)$. $\qquad$
\end{proof}

For blurring operators, the SVD-based subspace $\mS_k$ contains low-frequency components,
while $\mS_k^\perp$ contains relatively high-frequency components.
It is therefore very likely that the projection of $\id$ onto $\mS_k$ is not zero, and in fact
this is easy to check.

\subsubsection{Discrete Cosine Vectors}

Another suitable set of basis vectors for $\mathcal{S}_k$ are those associated with
spectral transforms such as the discrete sine or cosine transforms (DST or DCT)
and their multidimensional extensions~\cite{haje08,hanaol06}.
Recall that for 1D signals of length $m$, the orthogonal DCT matrix $C$ has elements
\[
  c_{ij} = \left\{
  \begin{array}{l l}
    \sqrt{\nicefrac{1}{m}} & \quad \text{if $i=0$}\\[1mm]
    \sqrt{\nicefrac{2}{m}}\,\cos\!\left(\frac{(2j+1)i\pi}{2m}\right) & \quad \text{if $i>0$}\\
  \end{array} \right.
  \quad \mbox{for} \quad i,j = 0,1,2,\cdots, m-1 \ .
\]
The 2-dimensional DCT matrix is the Kronecker product $C \otimes C$ of the
above matrix~\cite{vapi93}.
The DCT basis vectors, which are the \textit{rows} of the DCT matrix, have the
desired spectral properties.
Multiplications with $W_k$ and $W_0$ and their transposes are equivalent to computing
either a DCT transform or its inverse, which is done by fast algorithms similar to the FFT\@.
For this basis we have the following result.

\begin{theorem}\label{thm:dct}
Let the columns of $W_k$ be the first $k$ 2D DCT basis vectors.
Then the modified projection problem \eqref{eppeq:epp} has a unique solution if and only if
$\id\notin \mN(A)$.
\end{theorem}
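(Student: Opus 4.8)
The plan is to apply Theorem~\ref{thm:epp}, which tells us that uniqueness of the minimizer of \eqref{eppeq:epp} holds if and only if $\mN(AW_kW_k^T)\cap\mN(L)=\{0\}$. Since $\mN(L)=\mathrm{span}\{e\}$ is one-dimensional, this intersection is trivial precisely when $e\notin\mN(AW_kW_k^T)$, so the real work is to characterize $\mN(AW_kW_k^T)$ when $W_k$ consists of the first $k$ 2D DCT basis vectors, and then to show that $e\in\mN(AW_kW_k^T)$ is equivalent to $e\in\mN(A)$.

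First I would observe, exactly as in the proof of Theorem~\ref{thm:svd}, that $W_kW_k^T$ is the orthogonal projector onto $\mathrm{range}(W_k)$, so a vector $z$ lies in $\mN(AW_kW_k^T)$ iff its projection $W_kW_k^Tz$ onto $\mathrm{range}(W_k)$ lies in $\mN(A)$. In particular $e\in\mN(AW_kW_k^T)$ iff $A\,(W_kW_k^Te)=0$. The key structural fact to invoke is that the very first DCT basis vector — the $i=0$ row of $C$, and hence the first column of $C\otimes C$ — is the constant vector, proportional to $e$ (the entries $\sqrt{1/m}$ in 1D tensor up to the constant $1/m$ vector in 2D). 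Since $W_k$ contains this constant vector among its first $k$ columns (for any $k\ge 1$), and the DCT basis is orthonormal, we get $W_kW_k^Te=e$; that is, $e$ is fixed by the projector. Therefore $A\,(W_kW_k^Te)=Ae$, and $e\in\mN(AW_kW_k^T)$ iff $Ae=0$, i.e. iff $e\in\mN(A)$. Combining with Theorem~\ref{thm:epp} gives the claim.

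The main obstacle — really the only nontrivial point — is justifying that the constant vector $e$ is indeed one of the first $k$ 2D DCT basis vectors, and that the natural ordering of the DCT basis used to form $W_k$ places it first. This follows from the explicit formula for $c_{ij}$ given in the text (the $i=0$ row is constant) together with the Kronecker structure $C\otimes C$ of the 2D transform, since the $(0,0)$ tensor-index basis vector is the product of the two constant 1D vectors and hence constant; under the usual ``first $k$'' ordering by increasing frequency this is column~$1$. Once that is in hand, the orthonormality of the DCT basis makes $W_kW_k^Te=e$ immediate, and the rest is a one-line reduction through Theorem~\ref{thm:epp}. I would also note in passing that, just as with the SVD case, the condition $e\notin\mN(A)$ is easy to check in practice and is essentially always satisfied for blurring operators, since a blurring operator applied to a constant image returns (a scaled) constant image rather than zero.
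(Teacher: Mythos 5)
Your proposal is correct and follows essentially the same route as the paper's own proof: both reduce via Theorem~\ref{thm:epp} to checking $\mN(AW_kW_k^T)\cap\mathrm{span}\{e\}=\{0\}$, observe that the first 2D DCT basis vector is $e/\|e\|_2$ so that $W_kW_k^Te=e$ and hence $AW_kW_k^Te=Ae$, and conclude that uniqueness holds iff $Ae\neq 0$. Your write-up simply spells out the projector argument and the ordering of the DCT basis in more detail than the paper does.
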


\begin{proof}
From the definition of the DCT it follows that $w_1 = e / \| e \|_2$ and hence
$A W_k W_k^T e = A e$, and therefore
$\mathcal{N}(A W_k W_k^T) \cap \{ e \} = \{ 0 \} \Leftrightarrow
 Ae \neq 0 \Leftrightarrow e \notin \mathcal{N}(A)$.  $\qquad$
\end{proof}

\subsection{One-Dimensional Example}

We illustrate the use of the EPP algorithm with a one-dimensional test
problem, which uses the coefficient matrix $A$ from the \textsf{phillips}
test problem in \cite{RT} with dimension $n=64$.
The exact solution $\bar{x}$ is constructed to be piecewise constant, and
the right-hand side is $b=A\, \bar{x}$ (no noise).

\begin{figure}[h]
\begin{center}
\includegraphics[width= 0.85\textwidth]{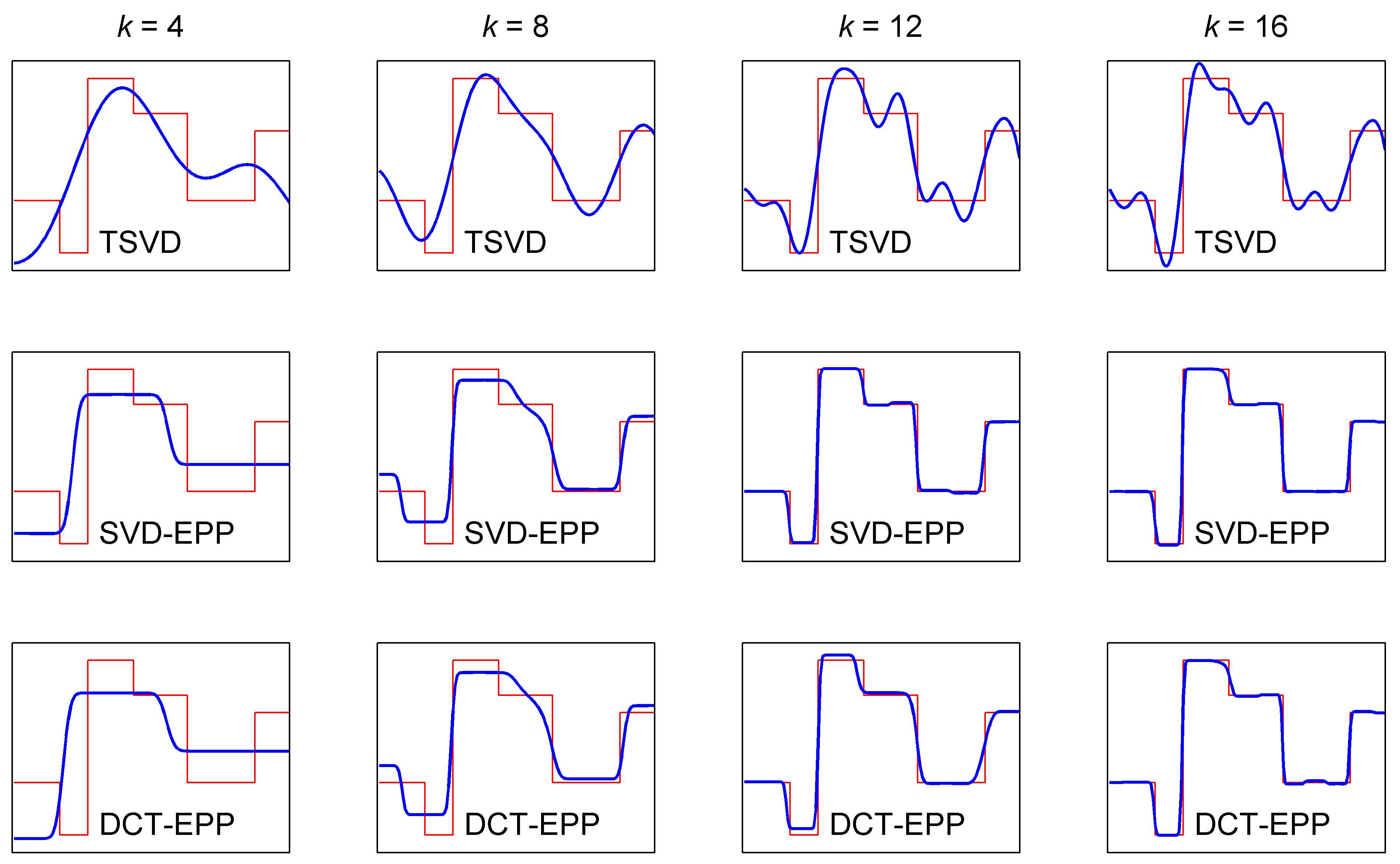}
\vspace*{-2mm}
\end{center}
\caption{\label{fig:ex1D}
  Thin red lines:\ the piecewise constant exact solution.
  Thick blue lines:\ TSVD and EPP reconstructions; $L$ is a discrete
  approximation to the first derivative operator, and $p=1.03$.}
\end{figure}

\begin{figure}[h]
\begin{center}
\includegraphics[width= 0.85\textwidth]{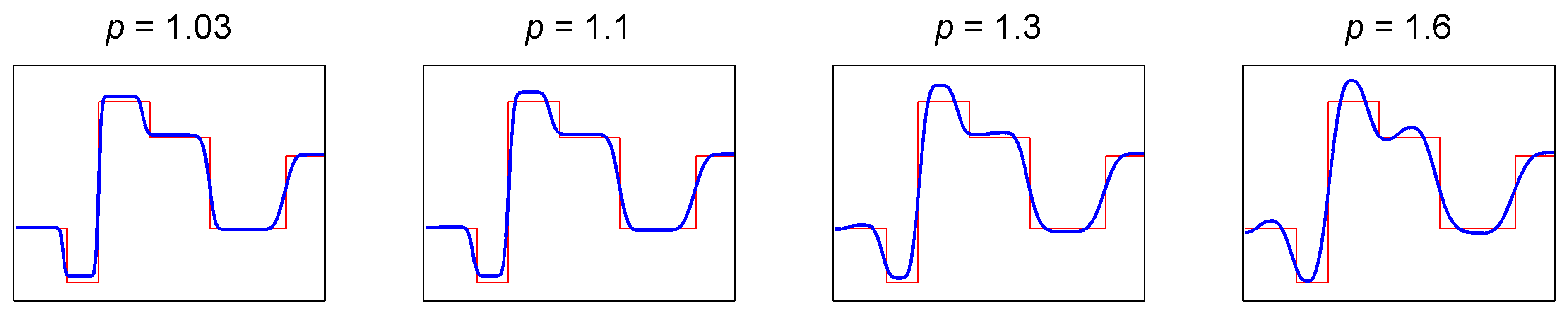}
\vspace*{-2mm}
\end{center}
\caption{\label{fig:ex1Dpp} DCT-EPP solutions for four values of $p$
  and the same $L$ as above. }
\end{figure}

Figure \ref{fig:ex1D} shows regularized solutions for four values of~$k$,
computed with the TSVD algorithm and the EPP algorithm with the SVD and
DCT bases.
The matrix $L$ is an approximation to the first derivative operator, and we use $p=1.03$.
The TSVD solutions are clearly too ``smooth'' to approximate the exact solution.
On the other hand, once $k$ is large enough that the projected component $x_k$ in
the EPP solution captures the overall structure of the solution, the EPP
algorithm is capable of producing good approximations to~$\bar{x}$
(we note that $x_k$ is identical to the TSVD solution for the SVD basis).
Figure \ref{fig:ex1Dpp} shows DCT-EPP solutions using the same $L$ as above
for four different values of~$p$, thus illustrating how $p$ controls the smoothness
of the EPP solution.
%The number of IRLS iterations increases as $p$ approaches~1 (we used a
%relative stopping criterion $10^{-3}$ on the IRLS step).

\section{Computational Issues and Numerical Implementations}
\label{eppsec:alg}

While the above analysis guarantees the existence and uniqueness of the solution to
\eqref{eppeq:epp}, it is critical to develop an efficient numerical implementation
for large-scale problems, which must take the following three issues into account:
\begin{itemize}
\item efficiently construct, or perform operations with, the basis vectors
      for the subspace~$\mS_k$,%smooth
\item robustly choose the optimal dimension $k$ of $\mS_k$, and
\item efficiently solve the $p$-norm minimization problem \eqref{eppeq:eppz}.
\end{itemize}
The optimal subspace dimension can be computed standard parameter-choice algorithms
\cite{hansenbook}; here we use the GCV method as explained in Section~\ref{eppsec:exp}.

\subsection{Working with the Projection Spaces}

As discussed above, the singular vectors and the 2D DCT matrix can be used
as the basis vectors for $\mS_k$ and $\mS_k^\perp$.
Here we will address numerical implementation issues with these choices.

For large-scale deblurring problems it is impossible to obtain $W_k = [v_1, \cdots, v_k]$
by computing the SVD of the blurring matrix $A$ without utilizing its structure.
Fortunately, in many problems the underlying point-spread function is separable or can be
approximated by a separable one \cite{hanaol06,kana99,nangpe03,vapi93}.
Hence, the blurring matrix $A$ can be represented as a Kronecker product $A_1 \otimes A_2$.
Given the SVDs of the two matrices $A_1 = U_1 \Sigma_1 V_1^T$ and $A_2 = U_2 \Sigma_2 V_2^T$,
the right singular matrix of $A$ is (or can be approximated by)
$V = \Pi (V_1 \otimes V_2)$, where the permutation matrix $\Pi$ ensures that the
ordering of the singular vectors is in accordance with decreasing singular values
(i.e., the diagonal elements of $\Pi(\Sigma_1 \otimes \Sigma_2)$).

For the DCT basis, multiplication with the $m\times m$ DCT matrix $C$ is implemented
in an very efficient way using the FFT algorithm, requiring only $O(m\log m)$ operations,
and a similar fast algorithm is available for the 2D DCT\@.
The multiplications with $W_k$ and its transpose are equivalent to applying
either the DCT or its inverse.
Therefore, it is unnecessary to form the matrix $W_k$ explicitly.

\subsection{Iteratively Reweighted Least Squares and AMG Preconditioner}

The key to the success of the EPP Algorithm is an efficient solver for the
$p$-norm minimization problem~\eqref{eppeq:eppz}.
A standard and robust approach is to use the iteratively reweighted least squares (IRLS)
method \cite{bjorck96,osborne85,irls88}, which is identical to Newton's method
with line search.
This approach reduces the $p$-norm problem to the solution of a sequence of
weighted least squares problems, which can be solved using iterative solvers.
Osborne \cite{osborne85} shows that the IRLS method is convergent for $1<p<3$.

\begin{algorithm}[t]
\caption{Iterative Reweighted Least Squares (IRLS) Algorithm
for minimizing $f(\hx)=\|\hA\,\hx -\hb\|_p$}%
\label{alg:irls}
\begin{algorithmic}[1]
\State $\hx_0 = 0$ \quad (starting vector)
\For{$j = 0, 1, 2, \ldots$ }
\State $\widehat{r}^j = \hb - \hA\,\hx^j $
\State $D_j = \diag(|r^j|^{(p-2)/2})$
\State $z^j = \mathrm{argmin}_z \| D_j ( \hA\, z - \widehat{r}^j )  \|_2$ \quad (solved iteratively)
%\State $y^j = \frac{1}{p-1} y^j$
\State $\alpha_j = \mathrm{argmin}_{\alpha} f(\hx^j+\alpha z^j)$ \quad (line search)
\State $\hx^{j+1} = \hx^j + \alpha^j z^j$
\EndFor
\end{algorithmic}
\end{algorithm}

For convenience, we briefly summarize the IRLS algorithm for solving the linear
$p$-norm problem $\min_{\hx} \| \hA \, \hx  - \hb  \|_p$.
We denote the $j$th iteration vector by $\hx^j$, and we introduce
the diagonal matrix $D_j$ determined by $j$th residual vector $\widehat{r}^j=\hb - r\hA \, \hx^j $ as
  \[
    D_j = \diag\Bigl( \bigl| \hA \, \hx^j - \hb \bigr|^{\frac{p-2}{2}} \Bigr).
  \]
The Newton search direction $z^j$ is identical to the solution of the weighted least squares problem
  \be
  \label{eppeq:irlsls}
    \min_{z} \| D_j \bigl( \hA \, z - \widehat{r}^j) \bigr \|_2.
  \ee
For $1<p<2$, as the iteration vector $\hx ^j$ gets close to the solution, the diagonal elements
in $D^j$ increase to infinity, and this tendency increases as $p$ approach~1.
Hence, the matrix $D_j\hA $ in \eqref{eppeq:irlsls} becomes increasingly ill-conditioned
as the iterations converge.
It is therefore difficult to find a suitable preconditioner for the least squares
problem~\eqref{eppeq:irlsls}.

Consider the corresponding normal equations
  \[
    \hA ^TD_j^2\hA \, z^j = \hA ^TD_j^2 \, \widehat{r}^j = \hA ^TD_j^2(\hb - \hA \, \hx^j ) ,
  \]
and define the new variable $q^{j} = z^j+\hx^j$. The normal equations can then be rewritten
  \be
  \label{eppeq:irls}
    \hA ^TD_j^2\hA \, q^{j}= \hA ^TD_j^2 \, \hb .
  \ee
The benefit of the above transformation is that the right-hand side in the new
system \eqref{eppeq:irls} depends on iteration $j$ only through $D^j$,
which is known in the $j$th iteration.\footnote{We thank Eric de Sturler for pointing this out.}

For our algorithm, it follows from \eqref{eppeq:eppz} that $\hA = LW_0$ and
$\hb = -LW_k y_k$, so \eqref{eppeq:irls} can be rewritten as
  \be
  \label{eppeq:irls2}
    W_0^T (L^TD_j^2L) W_0 \, q^j = - W_0^T (L^T D_j^2L) W_k y_k.
  \ee
Since the condition number increases as the IRLS algorithm converges to the solution,
preconditioning is necessary in solving \eqref{eppeq:irls2}.
Recall that $L$ is a gradient operator, and hence $L^TD_j^2L$ represents a diffusion
operator with large discontinuities in the diffusion coefficients.
Algebraic multi-grid (AMG) methods are robust when the diffusion coefficients are
discontinuous and vary widely~\cite{rust87, troo01}.
Therefore, we employ an AMG method to develop a right preconditioner $M$ for~\eqref{eppeq:irls2}.
The right-preconditioned problem is
  \be
  \label{eppeq:amgirls}
    [W_0^T (L^TD_j^2L) W_0 M] \tilde{q}^j = - W_0^T (L^T D_j^2L) W_k y_k,
  \ee
where $q^j = M\tilde{q}^j$.
In our implementation, given a vector $v$, the matrix-vector multiplication $w=Mv$
is implemented in three steps:
\begin{enumerate}
\item Compute $\tilde{v} = W_0 v$.
\item Use the AMG method to solve $(L^TD_j^2L)u = \tilde{v}$ for $u$.
\item Compute the result $w=W_0^Tu$.
\end{enumerate}

The matrix $W_0^T (L^TD_j^2L) W_0$ is symmetric positive definite if $D_j^2$ is positive definite.
If not, positive definiteness of $D_j^2$ can be guaranteed by adding a small positive number
to the diagonal elements.
A first thought may be to solve \eqref{eppeq:amgirls} with the conjugate gradient (CG)
method; but this requires that the preconditioner $M$ is also symmetric and positive definite.
In our implementation we use the Gauss-Seidel method in the pre- and post-relaxations of
the AMG method, and hence the AMG residual reduction operator is not symmetric~\cite{rust87},
and consequently the preconditioner is not symmetric.
Instead we solve \eqref{eppeq:irlsls} with the GMRES algorithm with right AMG
preconditioning~\cite{gmres86}.

\section{Numerical Results}
\label{eppsec:exp}

We present numerical experiments using the EPP algorithm, and we
perform a brief comparison with Total Variation deblurring.
To better visualize the impact of the high-frequency correction we use Matlab's
colormap \texttt{Hot} for the first example, which varies smoothly from black
through shades of red, orange, and yellow, to white, as the intensity increases.
Throughout we use the $256 \times 256$ ``cameraman'' test image. All the 
numerical simulations are performed using Matlab R2009b on Windows 7 x86 
32-bit system. The C compiler used to build AMG preconditioner MEX-files is 
Microsoft Visual Studio 2008. 

\subsection{Image Quality, PSFs, and Algorithm Parameters}

 \begin{figure}[t]
\centering
\includegraphics[width= 0.25\textwidth]{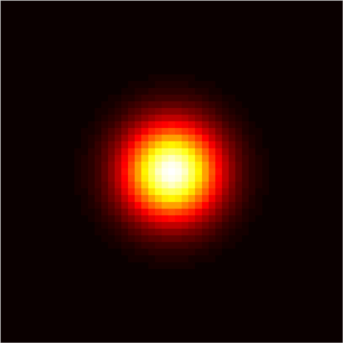}
\hspace{.25in}
\includegraphics[width= 0.25\textwidth]{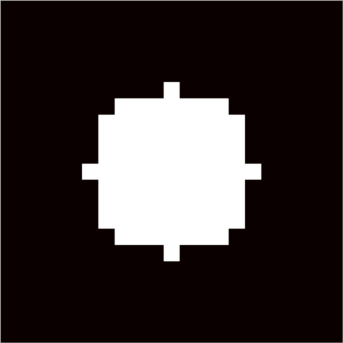}
\caption{Gaussian PSF with $\sigma=5$ (left) and out-of-focus PSF with $r=5$ (right).}
\label{fig:psfs}
\end{figure}

The ``noise level" of a test image is defined as $\| \eta \|_2/\| b \|_2$.
The quality of the restored images is measured by the relative error
$\| x_\text{restored} - \bar{x} \|_2 / \| \bar{x} \|_2$ and by the MSSIM \cite{ssim04}
(for which a larger value is better).
In our experiments the test images are generated with two common types of PSFs,
Gaussian blur and out-of-focus blur, and we use reflexive boundary conditions
in the restorations.
The elements of the \textbf{Gaussian PSF} are
  \[
    p_{ij} = \exp\!\left(-\frac{\sigma^2}{2} \Bigl( (i-k)^2 + (j-\ell)^2 \Bigr) \right),
  \]
and the elements of the \textbf{out-of-focus PSF} are
  \[
    p_{ij} = \left\{ \begin{array}{ll}
      \nicefrac{1}{\pi r^2} & \quad \text{if} \quad (i-k)^2+(j-l)^2 \leq r^2 \\[1mm]
    0 & \quad \text{otherwise} ,
  \end{array} \right.
\]
where $(k,\ell)$ is the center of the PSF, and $\sigma$ and $r$ are parameters
that determine the amount of blurring; see Fig.~\ref{fig:psfs}.
Both are doubly symmetric, but the latter is not separable, and therefore it is not
possible to efficiently compute the exact SVD of the corresponding matrix~$A$.
%In the numerical experiments shown later, the use of approximate singular vectors
%dramatically degrades the restored image quality.

To compute the subspace dimension $k$ we use the GCV method \cite{hansenbook}, which can
be implemented very efficiently when the singular vectors or the DCT basis are used.
The GCV function can be expressed as
  \[
    G(k) = \frac{\sum_{i=k+1}^n \beta_i^2}{(n-k)^2}, \quad \mbox{for} \qquad k = 1, 2, \cdots, n-1,
  \]
where $\beta_i = w_i^T b$ ($w_i$ being either the left singular vectors $u_i$ or
the DCT basis vectors).
As noted in \cite{chnaol08}, the GCV method very often provides a parameter
that is too large.
Also, in some of our experiments we assume that the singular vectors are approximated by a
Kronecker product, which might be not accurate.
Hence, we choose $k$ to be equal to $2/3$ of the output from GCV algorithm,
where the factor of $2/3$ was chosen on the basis of numerous experiments.

The stopping criteria used in the iterative methods were chosen based on exhaustive experiments
(see \cite{DonghuiThesis} for details) to balance computational time against the quality of the reconstruction. %are chosen by trial-and-error.
Results computed with smaller tolerances than those used here are qualitatively similar to those computed
with the chosen tolerances, but the computational time is much longer.
%\begin{itemize}
%\item Stopping criterion in the IRLS Algorithm~\ref{alg:irls}: $10^{-3}$.
%\item Stopping criterion in GMRES in Step 5 of Algorithm~\ref{alg:irls}: $10^{-2}$.
%\end{itemize}

\subsection{Performance of the EPP Algorithm}

In the EPP algorithm the norm parameter $p$ can be any number between $1$ and $2$.
For smaller $p$, the solution tends to have sharper edges, but as $p$ gets closer to $1$
the $p$-norm minimization in \eqref{eppeq:eppz} becomes more ill-conditioned requiring
much more computational work, while there are very little improvement of the restored images. 
Hence, we show computed results with $p=1.01, 1.05, 1.1$, and $1.2$.

 \begin{figure}[t]
\centering
\includegraphics[width= 0.24\textwidth]{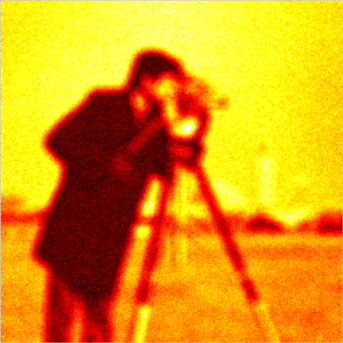}
%\hspace{.01in}
\includegraphics[width= 0.24\textwidth]{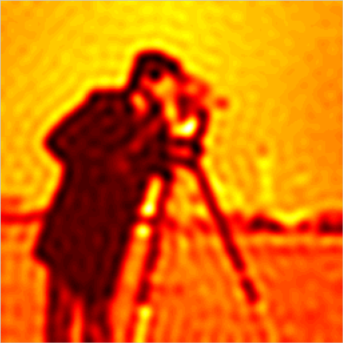}
%\hspace{.01in}
\includegraphics[width= 0.24\textwidth]{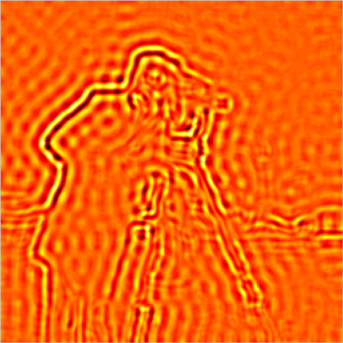}
%\hspace{.01in}
\includegraphics[width= 0.24\textwidth]{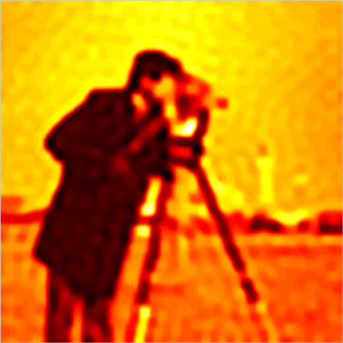}
\vspace*{-2mm}
\caption{\label{fig:dctdisk}
DCT-EPP algorithm, out-of-focus blur with $r=5$, noise level $5\%$.
Left to right:\ blurred noisy image, low-frequency component $x_k$, high-frequency
component $x_0$, and reconstruction.}
\vspace{5mm}
%\end{figure}
%
%\begin{figure}[t]
%\centering
\includegraphics[width= 0.24\textwidth]{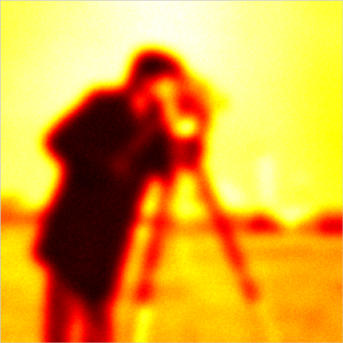}
%\hspace{.01in}
\includegraphics[width= 0.24\textwidth]{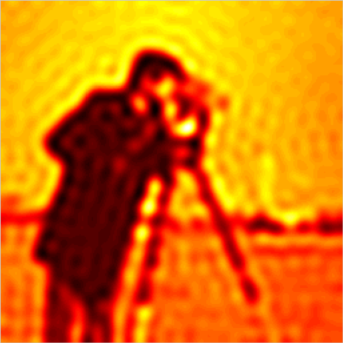}
%\hspace{.01in}
\includegraphics[width= 0.24\textwidth]{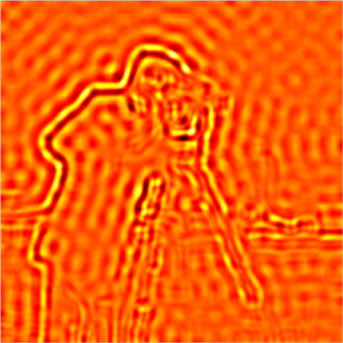}
%\hspace{.01in}
\includegraphics[width= 0.24\textwidth]{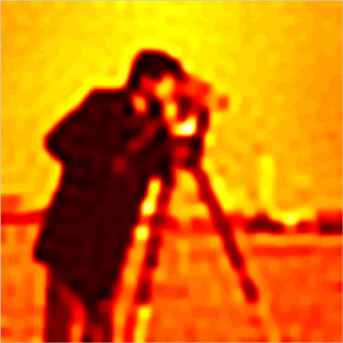}
\vspace*{-2mm}
\caption{\label{fig:dctgauss}
DCT-EPP algorithm, Gaussian blur with $\sigma=5$, noise level $1\%$.}
\vspace{5mm}
%\end{figure}
%
%\begin{figure}[h]
\centering
\includegraphics[width=0.23\textwidth]{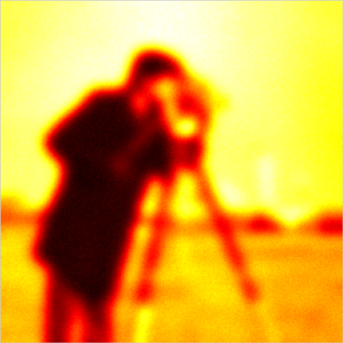}
\hspace{.01in}
\includegraphics[width=0.23\textwidth]{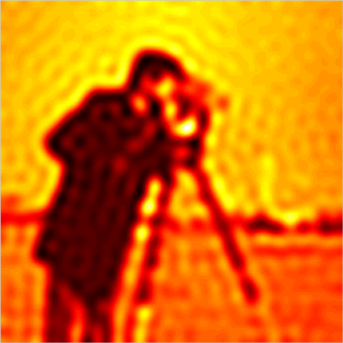}
\hspace{.01in}
\includegraphics[width=0.23\textwidth]{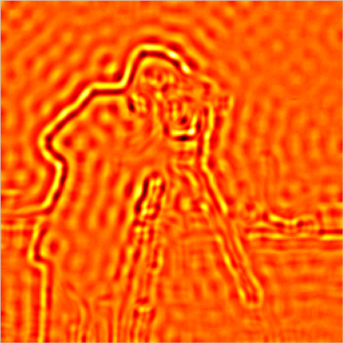}
\hspace{.01in}
\includegraphics[width=0.23\textwidth]{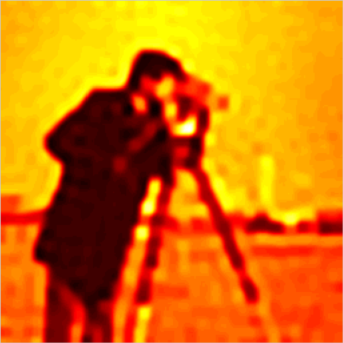}
\vspace*{-2mm}
\caption{\label{fig:svdgauss}
SVD-EPP algorithm, Gaussian blur with $\sigma=5$, noise level is $1\%$.}
\end{figure}

Table~\ref{tab:dct} shows the results of the restored out-of-focus blurred images
using the DCT-EPP algorithm.
The blur radius $r$ varies from $5$ to $15$ pixels, and the noise level
varies from 1\% to 10\%.
The table reports the computed truncation parameter $k$, the relative errors, and the MSSIM
for both $x_k$ and the final restored image.
Compared to the restored quality of $x_k$, the latter image has larger MSSIM
and smaller relative error, demonstrating that the correction step~\eqref{eppeq:eppz}
improves the image quality.
This is illustrated by the example in Figure~\ref{fig:dctdisk}.
The restored images computed using smaller $p$ are generally better than the results using larger $p$.
The corresponding results for Gaussian blur with $\sigma=5,10,15$, still using the DCT-EPP algorithm,
are also shown in Table~\ref{tab:dct}; see Fig.~\ref{fig:dctgauss} for an example.

Table \ref{tab:svd} summarizes the results for the SVD-EPP algorithm, again for out-of-focus
and Gaussian blur; see also Fig.~\ref{fig:svdgauss}.
For the Gaussian blur, the performance is similar to the DCT case.
The out-of-focus blur, however, is not separable. Therefore, we feed the SVD-EPP algorithm the
%must use the
approximate singular vectors obtained from a Kronecker-product approximation of~$A$
(with Toeplitz blocks). As this particular Kronecker approximation to the true SVD is not sufficiently good,
the algorithm performs poorly, whereas use of the DCT basis gives better quality reconstructions.
%For such problems the DCT basis is recommended.

\begin{landscape}
\renewcommand{\arraystretch}{1.1}
\begin{table}
\caption{Comparison of the quality of images restored by the DCT-EPP Algorithm
for $p=1.01,1.05,1.1,1.2$.
As expected, $p=1.01$ gives the best results.}~\label{tab:dct}
\centering
\begin{tabular}{|c|c|c|cc|cccc|cccc|} \hline
$r$ & noise & $k$ & \multicolumn{2}{c|}{$x_k$} & \multicolumn{8}{c|}{$x_{\mathrm{restored}}$} \\
\cline{4-13}
or & level &  & relative  & MSSIM & \multicolumn{4}{c|}{relative error} & \multicolumn{4}{c|}{MSSIM} \\
\cline{6-13}
$\sigma$ & \% &  & error &  & $p=1.01$ & $p=1.05$ & $p=1.1$ & $p=1.2$ &  $p=1.01$& $p=1.05$ &  $p=1.1$& $p=1.2$\\
\hline\hline
\multicolumn{13}{|c|}{\textbf{Out-of-focus PSF}} \\
\hline
5 & 1 & 2519 & 0.148 & 0.617& 0.135& 0.135& 0.135& 0.136& 0.707& 0.706& 0.704& 0.704  \\ 
5 & 5 & 1555 & 0.161 & 0.600& 0.151& 0.151& 0.151& 0.151& 0.668& 0.667& 0.666& 0.665  \\ 
5 & 10 & 1254 & 0.168 & 0.568& 0.159& 0.158& 0.158& 0.159& 0.637& 0.644& 0.644& 0.642  \\ 
\hline
10 & 1 & 1310 & 0.177 & 0.523& 0.158& 0.160& 0.159& 0.159& 0.640& 0.629& 0.634& 0.633  \\ 
10 & 5 & 427 & 0.204 & 0.493& 0.192& 0.192& 0.192& 0.193& 0.579& 0.578& 0.577& 0.572  \\ 
10 & 10 & 418 & 0.205 & 0.487& 0.194& 0.193& 0.194& 0.195& 0.570& 0.573& 0.565& 0.561  \\ 
\hline
15 & 1 & 772 & 0.195 & 0.488& 0.174& 0.174& 0.173& 0.175& 0.609& 0.611& 0.614& 0.604  \\ 
15 & 5 & 204 & 0.233 & 0.467& 0.223& 0.223& 0.223& 0.224& 0.531& 0.529& 0.527& 0.522  \\ 
15 & 10 & 203 & 0.234 & 0.463& 0.224& 0.224& 0.224& 0.225& 0.524& 0.523& 0.525& 0.519  \\ 
\hline\hline
\multicolumn{13}{|c|}{\textbf{Gaussian PSF}} \\
\hline
5 & 1 & 1188 & 0.168 & 0.577& 0.158& 0.158& 0.158& 0.158& 0.657& 0.656& 0.656& 0.654  \\ 
5 & 5 & 678 & 0.188 & 0.535& 0.176& 0.177& 0.177& 0.177& 0.613& 0.606& 0.610& 0.608  \\ 
5 & 10 & 560 & 0.196 & 0.508& 0.185& 0.186& 0.185& 0.186& 0.587& 0.580& 0.583& 0.579  \\ 
\hline
10 & 1 & 337 & 0.214 & 0.490& 0.202& 0.202& 0.203& 0.203& 0.563& 0.565& 0.560& 0.556  \\ 
10 & 5 & 242 & 0.228 & 0.463& 0.219& 0.219& 0.219& 0.220& 0.524& 0.524& 0.522& 0.516  \\ 
10 & 10 & 174 & 0.240 & 0.476& 0.231& 0.231& 0.231& 0.232& 0.531& 0.531& 0.530& 0.523  \\ 
\hline
15 & 1 & 167 & 0.241 & 0.471& 0.232& 0.232& 0.232& 0.233& 0.529& 0.525& 0.526& 0.522  \\ 
15 & 5 & 117 & 0.252 & 0.461& 0.240& 0.241& 0.240& 0.241& 0.525& 0.522& 0.524& 0.518  \\ 
15 & 10 & 100 & 0.257 & 0.460& 0.246& 0.246& 0.246& 0.247& 0.518& 0.517& 0.514& 0.510  \\ 
\hline
\end{tabular}
\end{table}
\end{landscape}

\begin{landscape}
\renewcommand{\arraystretch}{1.1}
\begin{table}
\caption{Comparison of the quality of images restored by the SVD-EPP Algorithm; similar to
Table~\ref{tab:dct}.}~\label{tab:svd}
\centering
\begin{tabular}{|c|c|c|cc|cccc|cccc|} \hline
$r$ & noise & $k$ & \multicolumn{2}{c|}{$x_k$} & \multicolumn{8}{c|}{$x_{\mathrm{restored}}$} \\
\cline{4-13}
or & level &  & relative  & MSSIM & \multicolumn{4}{c|}{relative error} & \multicolumn{4}{c|}{MSSIM} \\
\cline{6-13}
$\sigma$  & \% &  & error &  & $p=1.01$ & $p=1.05$ & $p=1.1$ & $p=1.2$ &  $p=1.01$& $p=1.05$ &  $p=1.1$& $p=1.2$\\
\hline\hline
\multicolumn{13}{|c|}{\textbf{Out-of-focus PSF}} \\
\hline
5 & 1 & 5648 &  0.239 &  0.448 & 0.246 & 0.246& 0.245& 0.245& 0.530 & 0.531& 0.530& 0.530  \\ 
5 & 5 & 2102 &  0.218 &  0.499 & 0.223 & 0.224& 0.223& 0.222& 0.565 & 0.564& 0.565& 0.565  \\ 
5 & 10 & 1311 &  0.219 &  0.492 & 0.219 & 0.218& 0.217& 0.216& 0.578 & 0.579& 0.578& 0.577  \\ 
\hline
10 & 1 & 2483 &  0.261 &  0.380 & 0.267 & 0.267& 0.267& 0.266& 0.443 & 0.444& 0.445& 0.444  \\ 
10 & 5 & 835 &  0.234 &  0.454 & 0.237 & 0.236& 0.236& 0.235& 0.525 & 0.525& 0.526& 0.523  \\ 
10 & 10 & 468 &  0.234 &  0.450 & 0.234 & 0.233& 0.233& 0.232& 0.535 & 0.533& 0.532& 0.527  \\ 
\hline
15 & 1 & 2173 &  0.323 &  0.202 & 0.334 & 0.335& 0.331& 0.331& 0.235 & 0.237& 0.234& 0.234  \\ 
15 & 5 & 486 &  0.254 &  0.433 & 0.253 & 0.254& 0.253& 0.252& 0.509 & 0.512& 0.509& 0.502  \\ 
15 & 10 & 336 &  0.250 &  0.421 & 0.249 & 0.250& 0.249& 0.248& 0.493 & 0.495& 0.494& 0.489  \\ 
\hline\hline
\multicolumn{13}{|c|}{\textbf{Gaussian PSF}} \\
\hline
5 & 1 & 1188 &  0.168 &  0.577 & 0.157 & 0.157& 0.157& 0.158& 0.664 & 0.662& 0.662& 0.657  \\ 
5 & 5 & 679 &  0.188 &  0.536 & 0.175 & 0.176& 0.175& 0.177& 0.621 & 0.616& 0.618& 0.611  \\ 
5 & 10 & 561 &  0.196 &  0.508 & 0.184 & 0.184& 0.185& 0.186& 0.594 & 0.589& 0.588& 0.580  \\ 
\hline
10 & 1 & 338 &  0.213 &  0.489 & 0.202 & 0.201& 0.202& 0.203& 0.564 & 0.570& 0.568& 0.559  \\ 
10 & 5 & 242 &  0.228 &  0.463 & 0.219 & 0.219& 0.219& 0.219& 0.527 & 0.523& 0.520& 0.512  \\ 
10 & 10 & 174 &  0.240 &  0.476 & 0.231 & 0.230& 0.231& 0.232& 0.535 & 0.536& 0.531& 0.524  \\ 
\hline
15 & 1 & 168 &  0.241 &  0.471 & 0.232 & 0.232& 0.232& 0.233& 0.535 & 0.533& 0.530& 0.522  \\ 
15 & 5 & 124 &  0.250 &  0.470 & 0.238 & 0.239& 0.239& 0.241& 0.534 & 0.532& 0.529& 0.519  \\ 
15 & 10 & 100 &  0.257 &  0.460 & 0.245 & 0.245& 0.245& 0.247& 0.524 & 0.520& 0.518& 0.508  \\ 
\hline
\end{tabular}
\end{table}
\end{landscape}

\subsection{Comparison with Total Variation Deblurring\label{eppsec:tv}}

We conclude by briefly comparing the performance of the EPP algorithm with the TV deblurring algorithm,
using the algorithm proposed in~\cite{hungwe08}.
In order to avoid giving our algorithm an advantage, the parameters of the TV algorithm
were chosen to optimize the MSSIM (which obviously requires the true image).
As shown in Table \ref{tab:TV}, the images restored by the TV method qualitatively
have better quality as those computed by EPP algorithm as measured by both the
relative error and the MMSIM\@.
This demonstrates that the EPP algorithm can be a computationally attractive
alternative to TV.

\begin{table}[t]
\renewcommand{\arraystretch}{1.1}
\caption{Comparison of the restored images by the TV and DCT-EPP algorithms with $p=1.01$.
There is no dramatic difference between the performance of the two algorithms.}~\label{tab:TV}
\centering
\begin{tabular}{|c|c|cc|cc|cc|cc|} \hline
$r$ & noise & \multicolumn{2}{c|}{relative error}  & \multicolumn{2}{c|}{MSSIM}
   & \multicolumn{2}{c|}{relative error}  & \multicolumn{2}{c|}{MSSIM} \\
\cline{3-10}
$\sigma$ & level \% & EPP & TV & EPP & TV & EPP & TV & EPP & TV \\
\hline\hline
 & & \multicolumn{4}{|c|}{\textbf{Out-of-focus PSF}} & \multicolumn{4}{|c|}{\textbf{Gaussian PSF}} \\
\hline
 5 & 1 & 0.135 & 0.150 & 0.707 & 0.677 & 0.158 & 0.167& 0.657 & 0.624  \\
 5 & 5 & 0.151 & 0.153 & 0.668 & 0.656 & 0.176 & 0.176& 0.613 & 0.587  \\
 5 & 10 &0.159 & 0.159 & 0.637 & 0.626 & 0.185 & 0.182& 0.587 & 0.559  \\
\hline
10 & 1 & 0.158 & 0.172 & 0.640 & 0.606 & 0.202 & 0.205& 0.563 & 0.528  \\
10 & 5 & 0.192 & 0.180 & 0.579 & 0.571 & 0.219 & 0.213& 0.524 & 0.507  \\
10 & 10 &0.194 & 0.187 & 0.570 & 0.495 & 0.231 & 0.219& 0.531 & 0.496  \\
\hline
15 & 1 & 0.174 & 0.184 & 0.609 & 0.562 & 0.232 & 0.232& 0.529 & 0.489  \\
15 & 5 & 0.223 & 0.195 & 0.531 & 0.520 & 0.240 & 0.236& 0.525 & 0.479  \\
15 & 10 &0.224 & 0.205 & 0.524 & 0.462 & 0.246 & 0.249& 0.518 & 0.455  \\
\hline
\end{tabular}
\end{table}

 \begin{figure}[h]
\centering
\includegraphics[width= 0.24\textwidth]{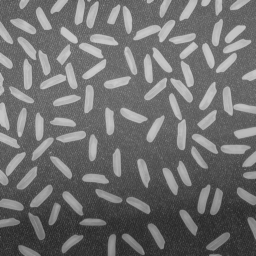}
%\hspace{.01in}
\includegraphics[width= 0.24\textwidth]{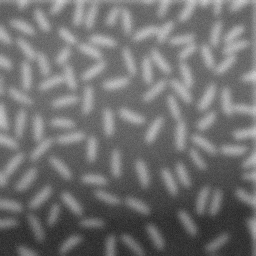}
%\hspace{.01in}
\includegraphics[width= 0.24\textwidth]{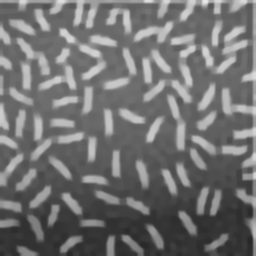}
%\hspace{.01in}
\includegraphics[width= 0.24\textwidth]{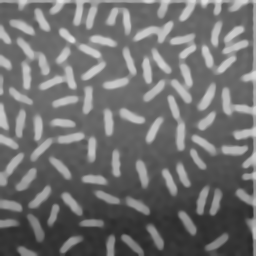}
\vspace*{-3mm}
\caption{Left to right:\ original rice grain image, blurred noisy image (Gaussian blur
with $\sigma = 3$ and noise level 0.03), DCT-EPP reconstruction
(PSNR = 24.3, MSSIM = 0.72), and TV reconstruction (PSNR = 24.9, MSSIM = 0.74).}
\label{fig:rice}
\end{figure}

To illustrate that the EPP and TV reconstructions have different features
(due to the different reconstruction models)
we consider MATLAB's $256 \times 256$
``rice grain'' image shown in Fig.~\ref{fig:rice} together with a Gaussian-blurred version
and the DCT-EPP and TV reconstructions.
The TV reconstruction has sharper edges, which comes at the expense of a more
complicated algorithm with much larger computing time.

\section{Conclusions}
\label{eppsec:con}

We developed a new computational framework for projection-based edge-preserving regularization,
and proved the existence and uniqueness of the solution.
Our algorithm uses standard computational building blocks and is therefore easy to implement
and tune to specific applications.
Our experimental results for image deblurring show that the reconstructions are better than
those from standard projection algorithms, and they are competitive with those from other
edge preserving restoration techniques.

\bibliographystyle{siam}
\bibliography{epp_ref}

\begin{thebibliography}{10}

\bibitem{BaglamaReichel}
{\sc J.~Baglama and L.~Reichel}, {\em Decomposition methods for large linear
  discrete ill-posed problems}, J. Comp. Appl. Math., 198 (2005), pp.~332--343.

\bibitem{bjorck96}
{\sc A.~Bj{\"o}rck}, {\em Numerical Methods for Least Squares Problems}, SIAM,
  Philadelphia, 1996.

\bibitem{DonghuiThesis}
{\sc D.~Chen}, {\em Numerical Methods for Edge-Preserving Image Restoration},
  PhD thesis, Tufts University, 2012.

\bibitem{chnaol08}
{\sc J.~Chung, J.~Nagy, and D.~P. O'{L}eary}, {\em A weighted-{GCV} method for
  {L}anczos-hybrid regularization}, Electronic Transactions on Numerical
  Analysis, 28 (2008), pp.~149--167.

\bibitem{elden82}
{\sc L.~Eld$\acute{\text{e}}$n}, {\em A weighted pseudoinverse, generalized
  singular values, and constrained least squares problems}, BIT Numerical
  Mathematics, 22 (1982), pp.~487--502.

\bibitem{RT}
{\sc P.~C. Hansen}, {\em Regularization {T}ools version 4.0 for {M}atlab 7.3},
  Numer. Algo., 46 (2007), pp.~189--194.

\bibitem{hansenbook}
\leavevmode\vrule height 2pt depth -1.6pt width 23pt, {\em Discrete Inverse
  Problems:\ Insight and Algorithms}, SIAM, Philadelphia, 2010.

\bibitem{PPTSVD2}
{\sc P.~C. Hansen, M.~Jacobsen, J.~M. Rasmussen, and H.~S. rensen}, {\em The
  {PP}-{TSVD} algorithm for image reconstruction problems}, in Methods and
  Applications of Inversion, Lecture Notes in Earth Science, Vol.~92, Springer,
  Berlin, 2000, pp.~171--186.

\bibitem{haje08}
{\sc P.~C. Hansen and T.~K. Jensen}, {\em Noise propagation in regularizing
  iterations for image deblurring}, Electron. Trans. Numer. Anal., 31 (2008),
  pp.~204--220.

\bibitem{pptsvd96}
{\sc P.~C. Hansen and K.~Mosegaard}, {\em Piecewise polynomial solutions
  without a priori break points}, Num. Lin. Alg. Appl., 3 (1996), pp.~513--524.

\bibitem{hanaol06}
{\sc P.~C. Hansen, J.~Nagy, and D.~P. O'{L}eary}, {\em Deblurring Images:
  Matrices, Spectra, and Filtering}, SIAM, Philadelphia, PA, USA, 2006.

\bibitem{mtsvd92}
{\sc P.~C. Hansen, T.~Sekii, and H.~Shibahashi}, {\em The modified truncated
  {SVD} method for regularization in general form}, SIAM J. Sci. Stat. Comput.,
  13 (1992), pp.~1142--1150.

\bibitem{hungwe08}
{\sc Y.~Huang, M.~Ng, and Y.~Wen}, {\em A fast total variation minimization
  method for image restoration}, J. Multiscale Model. Simul., 7 (2008),
  pp.~774--795.

\bibitem{kana99}
{\sc J.~Kamm and J.~Nagy}, {\em Optimal {K}ronecker product approximation of
  block {T}oeplitz matrices}, SIAM J. Matrix Anal. Appl, 22 (1999), p.~2000.

\bibitem{MuellerSiltanen}
{\sc J.~Mueller and S.~Siltanen}, {\em Linear and Nonlinear Inverse Problems
  with Practical Applications}, SIAM, Philadelphia, 2012.

\bibitem{nangpe03}
{\sc J.~Nagy, M.~Ng, and L.~Perrone}, {\em {K}ronecker product approximations
  for image restoration with reflexive boundary conditions}, SIAM J. Matrix
  Anal. Appl., 25 (2003), pp.~829--841.

\bibitem{osborne85}
{\sc M.~Osborne}, {\em Finite algorithms in optimization and data analysis},
  Wiley, Chichester, New York, 1985.

\bibitem{rust87}
{\sc J.~Ruge and K.~St\"uben}, {\em Algebraic multigrid}, in Multigrid Methods,
  S.~Mc{C}ormick, ed., Philidelphia, Pennsylvania, 1987, SIAM, pp.~73--130.

\bibitem{gmres86}
{\sc Y.~Saad and M.~Schultz}, {\em {GMRES}: a generalized minimal residual
  algorithm for solving nonsymmetric linear systems}, SIAM J. Sci. Stat.
  Comput., 7 (1986), pp.~856--869.

\bibitem{troo01}
{\sc U.~Trottenberg, C.~Oosterlee, and A.~Sch\"{u}ller}, {\em Multigrid},
  Academic Press, San Diego, CA, 2001.

\bibitem{vapi93}
{\sc C.~{Van Loan} and N.~Pitsianis}, {\em Approximation with {K}ronecker
  products}, in Linear Algebra for Large Scale and Real Time Applications,
  Kluwer Publications, 1993, pp.~293--314.

\bibitem{ssim04}
{\sc Z.~Wang, A.~Bovik, H.~Sheikh, and E.~Simoncelli}, {\em Image quality
  assessment: from error visibility to structural similarity}, IEEE Trans.
  Image Proces., 13 (2004), pp.~600--612.

\bibitem{irls88}
{\sc R.~Wolke and H.~Schwetlick}, {\em Iteratively reweighted least squares:
  Algorithms, convergence analysis, and numerical comparisons}, SIAM J. Sci.
  Stat. Comput., 9 (1988), pp.~907--921.

\end{thebibliography}

\end{document}